\newtheorem{Def}{Definition}
\newtheorem{Lem}{Lemma}
\newtheorem{Thm}{Theorem}
\newtheorem{Cor}{Corollary}
\newtheorem{Rem}{Remark}
\newenvironment{Pf}{ Proof.}{\(\square\)}
\title[On the extremal compatible linear connection...]{On the extremal compatible linear connection of a generalized Berwald manifold}
\author{Csaba Vincze}
\address{Inst. of Math., Univ. of Debrecen \\
H-4002 Debrecen, P.O.Box 400 \\
Hungary}
\email{csvincze@science.unideb.hu}
\keywords{Finsler spaces, Generalized Berwalds spaces, Intrinsic Geometry}
\subjclass{53C60, 58B20}
\begin{document}
\begin{abstract}
Generalized Berwald manifolds are Finsler manifolds admitting linear connections such that the parallel transports preserve the Finslerian length of tangent vectors (compatibi\-li\-ty condition). By the fundamental result of the theory \cite{V5} such a linear connection must be metrical with respect to the averaged Riemannian metric given by integration of the Riemann-Finsler metric on the indicatrix hypersurfaces. Therefore the linear connection (preserving the Finslerian length of tangent vectors) is uniquely determined by its torsion. If the torsion is zero then we have a classical Berwald manifold. Otherwise, the torsion is a strange data we need to express in terms of the intrinsic quantities of the Finsler manifold. In the paper we consider the extremal compatible linear connection of a generalized Berwald manifold by minimizing the pointwise length of its torsion tensor. It is a conditional extremum problem involving functions defined on a local neighbourhood of the tangent manifold. In case of a given point of the manifold, the reference element method provides that the number of the Lagrange multipliers equals to the number of the equations providing the compatibility of the linear connection to the Finslerian metric. Therefore the solution of the conditional extremum problem with a reference element  can be expressed in terms of the canonical data. The solution of the conditional extremum problem independently of the reference elements can be constructed algorithmically at each point of the manifold. The pointwise solutions constitute a section of the torsion tensor bundle for testing the compatibility of the corresponding linear connection to the Finslerian metric. In other words, we have an intrinsic algorithm to check the existence of compatible linear connections on a Finsler manifold because it is equivalent to the existence of the extremal compatible linear connection.
\end{abstract}

\maketitle
\footnotetext[1]{Cs. Vincze is supported by the EFOP-3.6.1-16-2016-00022 project. The project is co-financed by the European Union and the European Social Fund. The work is also supported by TKA-DAAD 307818. }

\section*{Introduction}

The notion of the generalized Berwald manifolds goes back to V. Wagner \cite{Wag1}. They are Finsler manifolds admitting linear connections such that the parallel transports preserve the Finslerian length of tangent vectors (compatibility condition). We are interested in the uni\-city of the compatible linear connection and its expression in terms of the canonical data of the Finsler manifold (intrinsic characterization). If the torsion is zero (classical Berwald manifolds), the intrinsic characterization is the vanishing of the mixed curvature tensor of the canonical horizontal distribution. The problem of the intrinsic characterization is solved in the more general case of Finsler manifolds admitting semi-symmetric compatible linear connections \cite{V10}, see also \cite{V11}. We also have a unicity statement because the torsion tensor of the semi-symmetric compatible linear connection can be explicitly expressed in terms of metrics and differential forms given by averaging. Especially, the integration of the Riemann-Finsler metric on the indicatrix hypersurfaces (the so-called averaged Riemannian metric) provides a  Riemannian environment for the investigations. The fundamental result of the theory \cite{V5} states that a linear connection satisfying the compatibility condition must be metrical with respect to the averaged Riemannian metric. Therefore the compatible linear connection is uniquely determined by its torsion tensor. Unfortunately, the unicity statement for the compatible linear connection of a generalized Berwald manifold is false in general \cite{Vin1}. To avoid the difficulties coming from different possible solutions, the idea is to look for the extremal solution in some sense: the extremal compatible linear connection of a generalized Berwald manifold keeps its torsion as close to the zero as possible. It is a conditional extremum problem involving functions defined on a local neighbourhood of the tangent manifold. In case of a given point of the manifold, the reference element method provides that the number of the Lagrange multipliers equals to the number of the equations providing the compatibility of the linear connection to the Finslerian metric. Therefore the solution of the conditional extremum problem with a reference element can be expressed in terms of the canonical data. The solution of the conditional extremum problem independently of the reference elements can be constructed algorithmically at each point of the manifold. The pointwise solutions constitute a continuous section of the torsion tensor bundle. The continuity of the components of the torsion tensor implies the continuity of the connection parameters. Using parallel translations with respect to such a connection we can conclude that the Finsler metric is monochromatic. By the fundamental result of the theory \cite{BM} it is sufficient and necessary for a Finsler metric to be a generalized Berwald metric. Therefore we have an intrinsic algorithm to check the existence of compatible linear connections on a Finsler manifold because it is equivalent to the existence of the extremal compatible linear connection.

\section{Notations and terminology}

Let $M$ be a differentiable manifold with local coordinates $u^1, \ldots, u^n.$ The induced coordinate system of the tangent manifold $TM$ consists of the functions $x^1, \ldots, x^n$ and $y^1, \ldots, y^n$. For any $v\in T_pM$, $x^i(v):=u^i\circ \pi (v)=p$ and $y^i(v)=v(u^i)$, where $i=1, \ldots, n$ and $\pi \colon TM \to M$ is the canonical projection. 
A Finsler metric is a continuous function $F\colon TM\to \mathbb{R}$ satisfying the following conditions:  $\displaystyle{F}$ is smooth on the complement of the zero section (regularity), $\displaystyle{F(tv)=tF(v)}$ for all $\displaystyle{t> 0}$ (positive homogeneity) and the Hessian 
$$g_{ij}=\frac{\partial^2 E}{\partial y^i \partial y^j}$$
of the energy function $E=F^2/2$ is positive definite at all nonzero elements $\displaystyle{v\in T_pM}$ (strong convexity), $p\in M$. The so-called \emph{Riemann-Finsler metric} $g$ is constituted by the components $g_{ij}$. It is defined on the complement of the zero section. The Riemann-Finsler metric makes the complement of the origin a Riemannian manifold in each tangent space. The canonical objects are the {\emph {volume form}} $\displaystyle{d\mu=\sqrt{\det g_{ij}}\ dy^1\wedge \ldots \wedge dy^n}$,
the \emph {Liouville vector field} $\displaystyle{C:=y^1\partial /\partial y^1 +\ldots +y^n\partial / \partial y^n}$ and the {\emph {induced volume form}}
$$\mu=\sqrt{\det g_{ij}}\ \sum_{i=1}^n (-1)^{i-1} \frac{y^i}{F} dy^1\wedge\ldots\wedge dy^{i-1}\wedge dy^{i+1}\ldots \wedge dy^n$$
 on the indicatrix hypersurface $\displaystyle{\partial K_p:=F^{-1}(1)\cap T_pM\ \  (p\in M)}$. The averaged Riemannian metric is defined by 
\begin{equation}
\label{averagemetric1}
\gamma_p (v,w):=\int_{\partial K_p} g(v, w)\, \mu=v^i w^j \int_{\partial K_p} g_{ij}\, \mu \ \ (v, w\in T_p M, p\in U).
\end{equation}

\begin{Def} A linear connection $\nabla$ on the base manifold $M$ is called \emph{compatible} to the Finslerian metric if the parallel transports with respect to $\nabla$ preserve the Finslerian length of tangent vectors. Finsler manifolds admitting compatible linear connections are called generalized Berwald manifolds.
\end{Def}

Suppose that the parallel transports with respect to $\nabla$ (a linear connection on the base manifold) preserve the Finslerian length of tangent vectors and let $X$ be a parallel vector field along the curve $c\colon [0,1]\to M$. We have that
\begin{equation}
\label{eq:5}
(F \circ X)'=(x^k\circ X)'{\frac{\partial F}{\partial x^k}}\circ X+(y^k \circ X)'{\frac{\partial F}{\partial y^k}}\circ X
\end{equation}
where $\displaystyle{(x^k\circ X)'={c^k}'}$ and $\displaystyle{{X^k}'=-{c^i}'  X^j  \Gamma_{ij}^k\circ c}$ 
because of the differential equation for parallel vector fields. Therefore 
\begin{equation}
\label{eq:55}
(F \circ X)'={c^i}'\bigg(\frac{\partial F}{\partial x^i}-y^j {\Gamma}_{ij}^{k}\circ \pi \frac{\partial F}{\partial y^k}\bigg)\circ X.
\end{equation}
This means that the parallel transports with respect to $\nabla$ preserve the Finslerian length of tangent vectors (compatibility condition) if and only if
\begin{equation}
\label{cond1}
\frac{\partial F}{\partial x^i}-y^j {\Gamma}^k_{ij}\circ \pi \frac{\partial F}{\partial y^k}=0,
\end{equation}
where $i=1, \ldots,n$. The vector fields of type
\begin{equation}
\label{eq:6}
X_i^{h}:=\frac{\partial}{\partial x^i}-y^j {\Gamma}^k_{ij}\circ \pi \frac{\partial}{\partial y^k}
\end{equation}
span the horizontal distribution belonging to $\nabla$. In a similar way, we can introduce the horizontal vector fields $X_i^{h^*}$ ($i=1, \ldots, n$) with respect to the L\'{e}vi-Civita connection of the averaged Riemannian metric $F^*(v):=\sqrt{\gamma_p(v,v)}.$ 

\begin{Thm}
\label{heritage} \emph{\cite{V5}} If a linear connection on the base manifold is compatible to the Finslerian metric function then it must be metrical with respect to the averaged Riemannian metric.
\end{Thm}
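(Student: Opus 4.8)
The plan is to argue through the parallel transport maps rather than through the connection parameters, using the fact that a linear connection $\nabla$ on $M$ is metrical with respect to $\gamma$ if and only if all its parallel transports are $\gamma$-orthogonal. So fix a curve $c\colon [0,1]\to M$ and let $P_c\colon T_{c(0)}M\to T_{c(1)}M$ be the (linear) parallel transport along $c$. By the compatibility condition, equivalently \eqref{cond1}, we have $F(P_c v)=F(v)$ for every $v$, so $P_c$ restricts to a diffeomorphism of the indicatrix hypersurfaces $\partial K_{c(0)}\to\partial K_{c(1)}$. The goal is then to show $\gamma_{c(1)}(P_c v,P_c w)=\gamma_{c(0)}(v,w)$, and I would reach it in two preparatory steps followed by a change of variables in the integral defining $\gamma$.

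The first step is to upgrade the invariance of $F$ to the invariance of the Riemann-Finsler metric. Since $E=F^2/2$ satisfies $E\circ P_c=E$ and $P_c$ is linear, differentiating twice in the fibre coordinates gives $g_{ij}(P_c u)\,(P_c)^i{}_k (P_c)^j{}_l=g_{kl}(u)$, i.e. in invariant form $g_{P_c u}(P_c X,P_c Y)=g_u(X,Y)$ for all $u\neq 0$. Thus $P_c$ is an isometry between the Riemannian manifolds $(T_{c(0)}M\setminus\{0\},g)$ and $(T_{c(1)}M\setminus\{0\},g)$.

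The second step, which I expect to be the main technical point, is the invariance of the induced volume form $\mu$. As an isometry, $P_c$ preserves the Riemannian volume form $d\mu=\sqrt{\det g_{ij}}\,dy^1\wedge\ldots\wedge dy^n$; being linear it preserves the Liouville vector field $C$; and by compatibility it preserves $F$. Writing $\mu$ as the restriction to the indicatrix of $F^{-1}\iota_C\,d\mu$ (which reproduces the displayed coordinate formula for $\mu$) and using the naturality of the interior product under the diffeomorphism $P_c$, I would conclude $P_c^{*}\mu=\mu$. The delicate point here is precisely that the radial (Liouville) structure, the normalisation by $F$, and the metric volume must all be shown to be respected simultaneously.

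Finally, I would substitute $u'=P_c u$ in $\gamma_{c(1)}(P_c v,P_c w)=\int_{\partial K_{c(1)}}g(P_c v,P_c w)\,\mu$. The change of variables turns this into an integral over $\partial K_{c(0)}$; the first step replaces the integrand $g(P_c v,P_c w)$ by $g(v,w)$, the second step replaces the transported measure by $\mu$, and the result is $\gamma_{c(1)}(P_c v,P_c w)=\gamma_{c(0)}(v,w)$. Hence every parallel transport is a $\gamma$-isometry, which is exactly the assertion that $\nabla$ is metrical with respect to the averaged Riemannian metric.
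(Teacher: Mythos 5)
The paper does not actually prove this statement: Theorem~\ref{heritage} is imported from \cite{V5} without proof, so there is no in-paper argument to compare yours against. That said, your argument is correct and is essentially the standard proof of this result. The chain of observations --- compatibility implies $E\circ P_c=E$; two fibrewise differentiations of this identity (using linearity of $P_c$) give $g_{ij}(P_cu)(P_c)^i{}_k(P_c)^j{}_l=g_{kl}(u)$, so $P_c$ is a fibrewise isometry of the Riemann--Finsler metric; it preserves the Liouville field and $F$, hence the induced volume form $\mu=\iota_{C/F}\,d\mu$ on the indicatrix; a change of variables in (\ref{averagemetric1}) then yields $\gamma_{c(1)}(P_cv,P_cw)=\gamma_{c(0)}(v,w)$; and $\nabla\gamma=0$ is equivalent to all parallel transports being $\gamma$-isometries --- is exactly the right skeleton and each step is sound. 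The only point worth making explicit is the orientation in the change of variables: from $\det g(P_cu)\,(\det P_c)^2=\det g(u)$ one gets $P_c^{*}\,d\mu=\mathrm{sgn}(\det P_c)\,d\mu$, so $P_c^{*}\mu=\pm\mu$ a priori; this causes no trouble either because $t\mapsto P_{c|[0,t]}$ connects $P_c$ to the identity so that $\det P_c>0$, or because the integral in (\ref{averagemetric1}) is really against the unsigned Riemannian volume density of the indicatrix. With that one-line remark added, your proof is complete.
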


In what follows we are going to substitute the connection parameters with the components of the torsion tensor in the equations of the compatibility condition (\ref{cond1}). Since the torsion tensor bundle can be equipped with a Riemannian metric in a natural way, we can measure the length of the torsion to formulate an extremum problem for the  compatible linear connection keeping its torsion as close to the origin as possible. 

\section{The extremal compatible linear connection of a generalized Berwald manifold}

Let $F$ be the Finslerian metric of a connected generalized Berwald manifold and suppose that $\nabla$ is a compatible linear connection. Taking vector fields with pairwise vanishing Lie brackets on a local neighbourhood of the base manifold, the Christoffel process implies that
$$X\gamma(Y,Z)+Y\gamma(X,Z)-Z\gamma(X,Y)=$$
$$2\gamma(\nabla_X Y, Z)+\gamma(X, T(Y,Z))+\gamma(Y, T(X,Z))-\gamma(Z, T(X,Y))$$
and, consequently, 
\begin{equation}
\label{Cproc}
\gamma(\nabla^*_X Y,Z)=\gamma(\nabla_X Y, Z)+\frac{1}{2}\left(\gamma(X, T(Y,Z))+\gamma(Y, T(X,Z))-\gamma(Z, T(X,Y))\right),
\end{equation}
where $\nabla^*$ denotes the L\'{e}vi-Civita connection of the averaged Riemannian metric $\gamma$ and $T$ is the torsion tensor of $\nabla$. In terms of the connection parameters
\begin{equation}
\label{torsion}
\Gamma_{ij}^r=\Gamma_{ij}^{*r}-\frac{1}{2}\left(T^{l}_{jk}\gamma^{kr}\gamma_{il}+T^{l}_{ik}\gamma^{kr}\gamma_{jl}-T_{ij}^r\right)
\end{equation}
and the compatibility condition (\ref{cond1}) can be written into the form
\begin{equation}
\label{cond2}
X_i^{h^*}F+\frac{1}{2}y^j \left(T^{l}_{jk}\gamma^{kr}\gamma_{il}+T^{l}_{ik}\gamma^{kr}\gamma_{jl}-T_{ij}^r\right)\circ \pi \frac{\partial F}{\partial y^r}=0\ \  (i=1, \ldots,n).
\end{equation}

Formula (\ref{Cproc}) shows that the correspondence $\nabla \rightleftharpoons T$ preserves the affine combinations of the linear connections, i.e. for any real number $\lambda\in \mathbb{R}$ we have 
$$\lambda \nabla_1+(1-\lambda)\nabla_2 \rightleftharpoons \lambda T_1+(1-\lambda)T_2.$$
Additionally, if $\nabla_1$ and $\nabla_2$ satisfy the compatibility condition (\ref{cond1}) then so does 
$$\lambda \nabla_1+(1-\lambda)\nabla_2.$$
This means that the set containing the restrictions of the torsion tensors of the compatible linear connections to the Cartesian product $\displaystyle{T_pM\times T_pM}$ is  an affine subspace in the linear space $\wedge^2 T_p^*M \otimes T_pM$ for any $p\in M$. As the point is varying we have an affine distribution of the torsion tensor bundle $\wedge^2 T^*M \otimes TM$. In terms of local coordinates, the bundle is spanned by
$$du^i \wedge du^j \otimes \frac{\partial}{\partial u^k} \quad (1\leq i < j\leq n, k=1, \ldots, n)$$
and its dimension is $\displaystyle{\binom{n}{2}n}$.

\begin{Def} The products 
$$du^i \wedge du^j \otimes \frac{\partial}{\partial u^k} \quad (1\leq i< j \leq n, k=1, \ldots, n)$$
form an orthonormal basis at the point $p\in M$ if the coordinate vector fields $\partial/\partial u^1, \ldots, \partial/\partial u^n$ form an orthonormal basis with respect to the averaged Riemannian metric at the point $p\in M$. The norm of the torsion tensor is defined by
\begin{equation}
\label{torsionnorm}
\|T_p\|^2=\sum_{1\leq i < j \leq n} \sum_{k=1}^n {T_{ij}^k(p)}^2
\end{equation}
provided that $\displaystyle{T=\sum_{1\leq i < j \leq n} \sum_{k=1}^nT_{ij}^k du^i \wedge du^j \otimes \frac{\partial}{\partial u^k}}$ 
and the products form an orthonormal basis at the point $p\in M$. The corresponding inner product is
$$\langle T_p, S_p \rangle=\sum_{1\leq i < j \leq n} \sum_{k=1}^n T_{ij}^k(p)S_{ij}^k(p).$$
\end{Def}

Let a point $p\in M$ be given and consider the affine subspace $A_p$ in $\wedge^2 T_p^*M \otimes T_pM$ defined by
\begin{equation}
\label{condextb} X_i^{h^*}F(v)+\frac{1}{2}y^j (v)\left(T^{l}_{jk}\gamma^{kr}\gamma_{il}+T^{l}_{ik}\gamma^{kr}\gamma_{jl}-T_{ij}^r\right)(p) \frac{\partial F}{\partial y^r}(v)=0,
\end{equation}
where $i=1, \ldots, n$ and $v\in T_pM$. Note that $A_p$ is nonempty because it contains the restrictions of the torsion tensors of the compatible linear connections to the Cartesian product $\displaystyle{T_pM\times T_pM}$. If $T_p\in A_p$ then we can write $A_p$ as the translate $\displaystyle{A_p=T_p+H_p}$, where the linear subspace $H_p \subset \wedge^2 T_p^*M \otimes T_pM$ is defined by
\begin{equation}
\label{condextc} \frac{1}{2}y^j (v)\left(T^{l}_{jk}\gamma^{kr}\gamma_{il}+T^{l}_{ik}\gamma^{kr}\gamma_{jl}-T_{ij}^r\right)(p) \frac{\partial F}{\partial y^r}(v)=0,
\end{equation}
where $i=1, \ldots, n$ and $v\in T_pM$.

\begin{Lem}
\label{lemmakey} If $\ \nabla$ is a compatible linear connection then the linear subspace $H_p$ is invariant under the action
$$(\varphi T)_p(v,w):=\varphi^{-1} T_p(\varphi(v), \varphi(v))$$
of the holonomy group of $\nabla$ at the point $p\in M$.
\end{Lem}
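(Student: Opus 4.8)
The plan is to rewrite the homogeneous system (\ref{condextc}) in a coordinate-free form and then exploit that every holonomy element is simultaneously a $\gamma$-isometry and an $F$-preserving linear map.

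First I would reformulate (\ref{condextc}). For a nonzero $v\in T_pM$ let $\omega_v\in T_p^*M$ be the vertical differential of $F$ at $v$, i.e. $\omega_v(u)=\frac{\partial F}{\partial y^r}(v)\,u^r$, and let $W_v\in T_pM$ be its $\gamma$-dual, determined by $\gamma(W_v,\cdot)=\omega_v$. Raising and lowering indices with $\gamma$ (using $\gamma^{kr}\frac{\partial F}{\partial y^r}(v)=W_v^k$) turns (\ref{condextc}) into the statement that a torsion tensor $S\in\wedge^2T_p^*M\otimes T_pM$ lies in $H_p$ if and only if
\begin{equation}\label{Hinvariant}
\gamma(u,S(v,W_v))+\gamma(v,S(u,W_v))-\gamma(W_v,S(u,v))=0
\end{equation}
for every nonzero $v\in T_pM$ and every $u\in T_pM$; here the three summands correspond to the three terms $T^l_{jk}\gamma^{kr}\gamma_{il}$, $T^l_{ik}\gamma^{kr}\gamma_{jl}$ and $-T^r_{ij}$ of (\ref{condextc}), the common factor $1/2$ drops out, and the index $i$ has been replaced by an arbitrary $u$ since $\partial/\partial u^1,\dots,\partial/\partial u^n$ is a basis. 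This step is routine but requires keeping the averaged metric $\gamma$ (which raises and lowers the torsion indices) carefully apart from the Riemann--Finsler metric hidden in $\partial F/\partial y^r$.

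Next I would record two properties of an element $\varphi$ of the holonomy group of $\nabla$ at $p$, that is, of a parallel transport around a loop based at $p$. Since $\nabla$ is compatible, Theorem \ref{heritage} gives that $\nabla$ is metrical with respect to $\gamma$, and a metrical connection has $\gamma$-orthogonal parallel transports; hence $\gamma(\varphi a,\varphi b)=\gamma(a,b)$. By the very definition of compatibility the parallel transports preserve the Finslerian length, so $F\circ\varphi=F$ on $T_pM$. Differentiating $F\circ\varphi=F$ in a direction $u$ (and using the linearity of $\varphi$) yields $\omega_v(u)=\omega_{\varphi v}(\varphi u)$, and combining this with the $\gamma$-isometry property gives the key transformation law
\begin{equation}\label{Wtransform}
W_{\varphi v}=\varphi\, W_v.
\end{equation}

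Finally I would substitute $\varphi S$, defined by $(\varphi S)(a,b)=\varphi^{-1}S(\varphi a,\varphi b)$ (note that the second slot should read $\varphi(w)$ in the statement), into the left-hand side of (\ref{Hinvariant}). Pushing $\varphi^{-1}$ across each $\gamma$ by the isometry identity $\gamma(a,\varphi^{-1}z)=\gamma(\varphi a,z)$ and using (\ref{Wtransform}) to replace $\varphi W_v$ by $W_{\varphi v}$, the expression collapses to
\begin{equation}\label{collapsed}
\gamma(\varphi u,S(\varphi v,W_{\varphi v}))+\gamma(\varphi v,S(\varphi u,W_{\varphi v}))-\gamma(W_{\varphi v},S(\varphi u,\varphi v)),
\end{equation}
which is precisely the left-hand side of (\ref{Hinvariant}) for $S$ evaluated at the arguments $\varphi u,\varphi v$. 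Since $S\in H_p$ and $\varphi v\neq 0$ whenever $v\neq 0$, this vanishes, so $\varphi S$ again satisfies (\ref{Hinvariant}) and therefore $\varphi S\in H_p$. I expect the main obstacle to be the transformation law (\ref{Wtransform}): it is the only place where the two preservation properties of $\varphi$ must be used together, and it is exactly what makes the $\gamma$-dual $W_v$ of the Finsler-defined vertical differential behave equivariantly under the holonomy action.
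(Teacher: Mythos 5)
Your proposal is correct and is essentially the paper's own argument in coordinate-free dress: the paper performs the identical computation in a $\gamma$-orthonormal basis, substituting $v\mapsto\varphi(v)$ into (\ref{condextc}), using $F\circ\varphi=F$ to transform $\partial F/\partial y^r$ (your law $W_{\varphi v}=\varphi W_v$), the orthogonality of the matrix of $\varphi$ with respect to $\gamma$, and a final multiplication by that matrix to relabel the free index $i$ (your replacement $u\mapsto\varphi u$). You also correctly spot that the second slot in the displayed action should read $\varphi(w)$.
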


\begin{proof} Suppose that  the coordinate vector fields $\partial/\partial u^1, \ldots, \partial/\partial u^n$ form an orthonormal basis with respect to the averaged Riemannian metric at the point $p\in M$ and let $Q_i^j$ be the matrix representation of $\varphi$, $P_j^i=\left(Q_i^j\right)^{-1}$. Evaluating (\ref{condextc}) at $\varphi(v)$ we have an equivalent system of equations because $v$ runs through the non-zero elements in $T_pM$. So does $\varphi(v)$. Therefore (\ref{condextc}) is equivalent to
$$\frac{1}{2}y^j \circ \varphi (v)\left(T^{l}_{jk}\gamma^{kr}\gamma_{il}+T^{l}_{ik}\gamma^{kr}\gamma_{jl}-T_{ij}^r\right)(p) \frac{\partial F}{\partial y^r}\circ \varphi(v)=0,$$
$$ \frac{1}{2}y^b (v)\left(T^{l}_{jk}Q_b^j P_r^c \gamma^{kr}\gamma_{il}+T^{l}_{ik}Q_b^j P_r^c\gamma^{kr}\gamma_{jl}-T_{ij}^rQ_b^j P_r^c\right)(p) \frac{\partial F}{\partial y^c}\circ \varphi(v)=0,$$
where $i=1, \ldots, n$ and $v\in T_pM$. Indeed, $\displaystyle{\varphi^j=y^j\circ \varphi=Q_b^j y^b}$ and the invariance property $F\circ \varphi=F$ implies that 
$$\frac{\partial F}{\partial y^c}(v)=\frac{\partial F\circ \varphi}{\partial y^c}(v)=Q_c^r\frac{\partial F}{\partial y^r}\circ \varphi(v) \ \Rightarrow \ P_r^c\frac{\partial F}{\partial y^c}(v)=\frac{\partial F}{\partial y^r}\circ \varphi(v).$$
Since the identities $\displaystyle{Q_a^i Q_b^j\gamma_{ij}(p)=\gamma_{ab}(p)}$ and $\displaystyle{P_i^a P_j^b \gamma^{ij}(p)=\gamma^{ab}(p)}$ give that $\displaystyle{Q_b^j \gamma_{jl}(p)=P_l^j \gamma_{jb}(p)}$ and $\displaystyle{P_j^b \gamma^{jk}(p)=Q_j^k\gamma^{jb}(p)}$, we have  
$$\frac{1}{2}y^b (v)\left(T^{l}_{jk}Q_b^j Q_r^k \gamma^{rc}\gamma_{il}+T^{l}_{ik}Q_b^j Q_r^k\gamma^{rc}\gamma_{jl}-T_{ij}^r Q_b^j P_r^c\right)(p) \frac{\partial F}{\partial y^c}(v)=0,$$
where $i=1, \ldots, n$ and $v\in T_pM$. Taking the product by the matrix $Q^i_a$ the equivalent system of equations is
$$\frac{1}{2}y^b (v)\left(T^{l}_{jk}Q_a^i Q_b^j Q_r^k \gamma^{rc}\gamma_{il}+T^{l}_{ik}Q_a^i Q_b^j Q_r^k\gamma^{rc}\gamma_{jl}-T_{ij}^r Q_a^i Q_b^j P_r^c\right)(p) \frac{\partial F}{\partial y^c}(v)=0,$$
$$\frac{1}{2}y^b (v)\left(T^{l}_{jk}P_l^i Q_b^j Q_r^k \gamma^{rc}\gamma_{ia}+T^{l}_{ik}Q_a^i P_l^j Q_r^k\gamma^{rc}\gamma_{jb}-T_{ij}^r Q_a^i Q_b^j P_r^c\right)(p) \frac{\partial F}{\partial y^c}(v)=0,$$
$$\frac{1}{2}y^b (v)\left(\left(\varphi T\right)_{br}^i \gamma^{rc}\gamma_{ia}+\left(\varphi T\right)_{ar}^j \gamma^{rc}\gamma_{jb}-\left(\varphi T\right)_{ab}^c\right)(p) \frac{\partial F}{\partial y^c}(v)=0,$$
where $a=1, \ldots, n$ and $v\in T_pM$.
\end{proof}

\begin{Rem} {\emph{The previous argument is obviously working for any element of the group $G$ containing orthogonal transformations of the tangent space $T_pM$  with respect to the averaged Riemannian metric such that the Finslerian indicatrix is invariant: $F\circ \varphi=F$ ($\varphi\in G$). It is also clear that $G$ is a compact subgroup in the orthogonal group and $\textrm{Hol}_p \nabla \subset G$.}}
\end{Rem}

\begin{Cor} 
\label{smoothness} If we have a connected generalized Berwald manifold then the mapping $p\in M\to A_p\subset \wedge^2 T_p^*M \otimes T_pM$ is a smooth affine distribution of constant rank of the torsion tensor bundle.
\end{Cor}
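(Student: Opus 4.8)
The plan is to reduce the assertion to a constant-rank and smoothness statement for the associated \emph{linear} distribution $p\mapsto H_p$ and then to propagate both properties across $M$ by parallel transport along a fixed compatible connection. Since $M$ is a generalized Berwald manifold, fix a compatible linear connection $\nabla$ with torsion $T^\nabla$. The section $p\mapsto T^\nabla_p$ of $\wedge^2 T^*M\otimes TM$ is smooth, and by the compatibility condition written in the form (\ref{cond2}) it satisfies $T^\nabla_p\in A_p$ at every point; hence $A_p=T^\nabla_p+H_p$, with $H_p$ defined by (\ref{condextc}). It therefore suffices to prove that $p\mapsto H_p$ is a smooth distribution of constant rank, since the smooth base section $T^\nabla$ then upgrades this to a smooth affine distribution.

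For the constant rank I would exploit Theorem \ref{heritage}: the connection $\nabla$ is metrical with respect to the averaged Riemannian metric $\gamma$, so for any curve $c$ from $p$ to $q$ the parallel transport $P_c\colon T_pM\to T_qM$ is a linear isometry of $(T_pM,\gamma_p)$ onto $(T_qM,\gamma_q)$, and compatibility gives $F\circ P_c=F$. Thus $P_c$ is precisely an orthogonal, indicatrix-preserving map of the kind discussed in the remark following Lemma \ref{lemmakey}, the sole novelty being that its source and target are different tangent spaces. Rerunning the computation of that proof with $\varphi=P_c$ --- the substitution $v\mapsto P_c^{-1}(v)$ is again a bijection of the nonzero vectors, the relation $F\circ P_c=F$ transforms the covector $\partial F/\partial y^r$ as required, and the matrix form of the isometry relation $\gamma_q(P_c u,P_c w)=\gamma_p(u,w)$ replaces the single-point orthogonality relations $Q_a^iQ_b^j\gamma_{ij}(p)=\gamma_{ab}(p)$ --- shows that the isomorphism $\wedge^2 T_p^*M\otimes T_pM\to\wedge^2 T_q^*M\otimes T_qM$ induced by $P_c$ carries $H_p$ onto $H_q$. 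Because $M$ is connected, any two of its points are joined by a curve, so $\dim H_p$ does not depend on $p$ and the distribution has constant rank.

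For the smoothness I would fix a base point $p_0$, choose a basis $T^{(1)},\dots,T^{(m)}$ of $H_{p_0}$, and in a coordinate neighbourhood select a smooth family of curves $c_p$ from $p_0$ to $p$ (for example the coordinate segments). Smooth dependence of the solutions of the parallel-transport equation on the endpoint makes $p\mapsto P_{c_p}\!\cdot T^{(k)}$ a smooth local family of sections; by the preceding paragraph each of these sections lies in $H_p$, and since they are the images of a basis under a linear isomorphism they frame $H_p$. Hence $p\mapsto H_p$ admits smooth local frames, so it is a smooth distribution, and $A_p=T^\nabla_p+H_p$ is the required smooth affine distribution of constant rank.

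The step I expect to be the main obstacle is the claim that $P_c$ sends $H_p$ onto $H_q$, i.e. that the pointwise computation of Lemma \ref{lemmakey}, phrased there for an endomorphism of a single $T_pM$, remains valid verbatim for an isomorphism between different tangent spaces. What makes the transition harmless is exactly the pair of hypotheses at our disposal --- $P_c$ is a $\gamma$-isometry by Theorem \ref{heritage} and satisfies $F\circ P_c=F$ by compatibility --- so that every metric contraction and every derivative of $F$ occurring in (\ref{condextc}) is intertwined correctly. A secondary point deserving care is that, although the defining system (\ref{condextc}) carries infinitely many test vectors $v\in T_pM$ while $H_p$ is finite dimensional, the parallel-transport construction avoids having to isolate a finite, smoothly varying subsystem: it produces an explicit smooth frame of $H_p$ directly.
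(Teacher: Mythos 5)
Your proposal is correct and follows essentially the same route as the paper: the paper's proof likewise fixes a compatible connection $\nabla$, writes $A_q=T_q+\tau_{pq}(H_p)$ with $\tau_{pq}$ the parallel transport of $\nabla$, and relies on the Lemma \ref{lemmakey} computation to justify that transport carries the linear part from point to point. You merely spell out the two details the paper leaves implicit --- that the Lemma \ref{lemmakey} argument extends verbatim to $\gamma$-isometries between \emph{different} tangent spaces, and that smooth dependence of parallel transport on the endpoint yields smooth local frames.
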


\begin{proof} Let $\ \nabla$ be a compatible linear connection, $T$ be its torsion tensor and the point $p\in M$ be given. According to Lemma \ref{lemmakey} we have that $\displaystyle{A_q=T_q+\tau_{pq}(L_p)}$ for any $q\in M$, where $\tau_{pq}$ is the parallel transport along an arbitrary curve joining $p$ and $q$. 
\end{proof}

In what follows we introduce the extremal compatible linear connection of a generalized Berwald manifold in terms of its torsion $T^0$ by taking the closest point $T^0_q \in A_q$ to the origin for any $q\in M$.

\begin{Def} The extremal compatible linear connection of a generalized Berwald manifold is the uniquely determined compatible linear connection minimizing the norm of its torsion by taking the values of the pointwise minima. 
\end{Def}

\subsection{A conditional extremum problem for the extremal compatible linear connection} Let a point $p\in M$ be given and consider the conditional extremum problem
\begin{equation}
\label{condext} \min \frac{1}{2} \|T_p\|^2 \quad \textrm{subject to} \quad T_p \in A_p,
\end{equation}
where the affine subspace $A_p\subset \wedge^2 T_p^*M \otimes T_pM$ defined by (\ref{condextb}). First of all note that the coefficient of $T_{ab}^c$ ($1\leq a < b \leq n$, $c=1, \ldots, n$) is
\begin{equation}
\label{coeff}
\sigma_{c; i}^{ab}=\frac{1}{2}\left(\left(y^a \gamma^{br}-y^b\gamma^{ar}\right)\frac{\partial F}{\partial y^r}\gamma_{ic}+\left(\delta_i^a \gamma^{br}-\delta_i^b\gamma^{ar}\right)\frac{\partial F}{\partial y^r}y^j\gamma_{jc}-\left(\delta_i^a y^b-\delta_i^b y^a\right)\frac{\partial F}{\partial y^c}\right),
\end{equation}
where the index $i=1, \ldots, n$ refers to the corresponding equation in (\ref{cond2}). The symmetric differences in formula (\ref{coeff}) is due to $a< b$. If the coordinate vector fields $\displaystyle{\partial/\partial u^1, \ldots, \partial/\partial u^n}$ form an orthonormal basis at $p\in M$ with respect to the averaged Riemannian metric $\gamma$, then 
\begin{equation}
\label{coeffort}
\sigma_{c; i}^{ab}=\frac{1}{2}\left(\delta_i^c\left(y^a \frac{\partial F}{\partial y^b}-y^b\frac{\partial F}{\partial y^a}\right)+\delta_i^a \left(y^c\frac{\partial F}{\partial y^b}-y^b\frac{\partial F}{\partial y^c}\right)-\delta_i^b \left(y^c\frac{\partial F}{\partial y^a}-y^a\frac{\partial F}{\partial y^c}\right)\right).
\end{equation}
Since the vector fields 
$$y^a \frac{\partial }{\partial y^b}-y^b\frac{\partial }{\partial y^a}, \ y^c\frac{\partial}{\partial y^b}-y^b\frac{\partial}{\partial y^c}, \ y^c\frac{\partial}{\partial y^a}-y^a\frac{\partial}{\partial y^c}$$
come from the Liouville vector field (the outer unit normal to the Finslerian indicatrix) by an Euclidean quarter rotation in the corresponding $2$-planes, their actions on $F$ are automatically zero at the contact points of the Finslerian and the Riemannian spheres.

\subsection{Vertical and horizontal contact points}

\begin{Def}
\label{contactpoints} The nonzero element $v\in T_pM$ satisfying
$$\frac{\partial \log F}{\partial y^i}(v)=\frac{\partial \log F^*}{\partial y^i}(v) \ \ (i=1, \ldots, n)$$
is called a vertical contact point of the Finslerian and the averaged Riemannian metric functions. The nonzero element $v\in T_pM$ is a horizontal contact point of the Finslerian and the averaged Riemannian metric functions if
$$X_i^{h^*} F(v)=0 \ \ (i=1, \ldots, n).$$
The tangent space at $p\in M$ is vertical/horizontal contact if all nonzero elements $v\in T_pM$ are vertical/horizontal contact. 
\end{Def}

The vertical/horizontal contact vector fields can also be defined in a similar way: the vector field $X$ on the base manifold is vertical/horizontal contact if either $X(p)\in T_pM$ is vertical/horizontal contact or $X(p)={\bf 0}$.

\begin{Rem}
{\emph{First of all note that the vertical contact points are independent of the choice of the coordinate system. Geometrically,  the tangent hyperplanes of the Finslerian and the Riemannian spheres passing through a vertical contact point are the same in the corresponding tangent space. This is because their Euclidean gradient vectors in $T_pM$ are proportional:}}
\begin{equation}
\label{vertcont1}
\frac{\partial F}{\partial y^i}(v)=\frac{F}{F^*}(v)\frac{\partial F^*}{\partial y^i}(v) \ \ (i=1, \ldots, n),
\end{equation}
{\emph{where the coordinate vector fields $\displaystyle{\partial/\partial u^1, \ldots, \partial/\partial u^n}$ form an orthonormal basis at $p\in M$ with respect to the averaged Riemannian metric $\gamma$.}}
\end{Rem}

\begin{Cor}
\label{contactpoints01} The vertical contact points of a generalized Berwald manifold are horizontal contact points.
\end{Cor}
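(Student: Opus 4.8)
The plan is to exploit the explicit form of the coefficients $\sigma_{c;i}^{ab}$ computed just above, together with the existence of at least one compatible linear connection guaranteed by the generalized Berwald hypothesis. Fix $p\in M$ and choose local coordinates so that $\partial/\partial u^1,\ldots,\partial/\partial u^n$ is $\gamma$-orthonormal at $p$; then the coefficients are given by the symmetric expression (\ref{coeffort}), each of whose three summands is a $\delta$-multiple of a quarter-rotation term of the shape $y^\alpha\,\partial F/\partial y^\beta-y^\beta\,\partial F/\partial y^\alpha$.

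First I would show that every such term vanishes at a vertical contact point $v$. In the chosen orthonormal frame $F^*(v)=\sqrt{\sum_r (y^r(v))^2}$, so $\partial F^*/\partial y^r(v)=y^r(v)/F^*(v)$, and hence $y^\alpha\,\partial F^*/\partial y^\beta-y^\beta\,\partial F^*/\partial y^\alpha=0$ at every $v$. The defining proportionality of a vertical contact point, written in the form (\ref{vertcont1}) as $\partial F/\partial y^r(v)=(F/F^*)(v)\,\partial F^*/\partial y^r(v)$, transfers this vanishing to $F$, since $y^\alpha\,\partial F/\partial y^\beta-y^\beta\,\partial F/\partial y^\alpha=(F/F^*)(v)\cdot 0=0$. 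This is exactly the geometric fact anticipated before the subsection: the rotated Liouville directions are tangent to the Finslerian indicatrix at the contact points. Consequently $\sigma_{c;i}^{ab}(v)=0$ for all indices.

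Next I would feed this into the constraint (\ref{condextb}). Since the manifold is a generalized Berwald manifold, there exists a compatible linear connection whose torsion restricts to an element $T_p\in A_p$; for this $T_p$ the constraint reads $X_i^{h^*}F(v)+\sum_{a<b}\sum_c\sigma_{c;i}^{ab}(v)\,T_{ab}^c(p)=0$ for each $i$, because $\sigma_{c;i}^{ab}$ is by construction the coefficient of $T_{ab}^c$ in the second term. As all $\sigma_{c;i}^{ab}(v)$ vanish, the sum drops out and $X_i^{h^*}F(v)=0$ for every $i=1,\ldots,n$, which is precisely the condition for $v$ to be a horizontal contact point. Although the argument is carried out in a fixed orthonormal frame, the horizontal lifts $X_i^{h^*}$ at $v$ transform linearly under a change of frame, so the simultaneous vanishing of all $X_i^{h^*}F(v)$ is frame-independent.

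The computations are routine; the only points demanding care are that the generalized Berwald hypothesis is genuinely needed (it is what makes $A_p$ nonempty, so that some torsion satisfies (\ref{condextb})), and that the proportionality (\ref{vertcont1}) be used in exactly the orthonormal frame in which (\ref{coeffort}) is valid. As an essentially equivalent coordinate-free route, one can instead note that the compatible connection $\nabla$ is $\gamma$-metric by Theorem \ref{heritage}, whence $X_i^h F^*=0$; comparing this with the compatibility relation $X_i^h F=0$ through (\ref{torsion}) shows that the contraction $\frac{1}{2}y^j\left(T^{l}_{jk}\gamma^{kr}\gamma_{il}+T^{l}_{ik}\gamma^{kr}\gamma_{jl}-T_{ij}^r\right)\partial F^*/\partial y^r$ vanishes identically, and the vertical contact proportionality then forces the analogous contraction against $\partial F/\partial y^r$, namely $X_i^{h^*}F$, to vanish at $v$ as well.
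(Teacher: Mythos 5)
Your proposal is correct and follows essentially the same route as the paper: in a $\gamma$-orthonormal frame the quarter-rotation terms vanish for $F^*$, the vertical contact proportionality (\ref{vertcont1}) transfers this to $F$, hence $\sigma_{c;i}^{ab}(v)=0$, and the nonemptiness of $A_p$ (the generalized Berwald hypothesis) forces $X_i^{h^*}F(v)=0$. Your explicit remark that the hypothesis is needed precisely to guarantee some $T_p$ satisfies (\ref{condextb}) is a point the paper leaves implicit, but the argument is the same.
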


\begin{Pf} Suppose that the coordinate vector fields $\displaystyle{\partial/\partial u^1, \ldots, \partial/\partial u^n}$ form an orthonormal basis at $p\in M$ with respect to the averaged Riemannian metric $\gamma$. It can be easily seen that 
\begin{equation}
\label{vertcont2}
y^a \frac{\partial F^*}{\partial y^b}(v)-y^b\frac{\partial F^*}{\partial y^a}(v)=y^c\frac{\partial F^*}{\partial y^b}(v)-y^b\frac{\partial F^*}{\partial y^c}(v)=y^c\frac{\partial F^*}{\partial y^a}(v)-y^a\frac{\partial F^*}{\partial y^c}(v)=0.
\end{equation}
Therefore, by formula (\ref{vertcont1}), 
\begin{equation}
\label{vertcont3}
y^a \frac{\partial F}{\partial y^b}(v)-y^b\frac{\partial F}{\partial y^a}(v)=y^c\frac{\partial F}{\partial y^b}(v)-y^b\frac{\partial F}{\partial y^c}(v)=y^c\frac{\partial F}{\partial y^a}(v)-y^a\frac{\partial F}{\partial y^c}(v)=0.
\end{equation}
This means that $\sigma^{ab}_{c;i}(v)=0$ and the corresponding equations in (\ref{condextb}) reduce to $\displaystyle{X_i^{h^*} F (v)=0}$ for any $i=1, \ldots, n$.
\end{Pf}

\begin{Cor}
\label{contactpoints02}
Let $p\in M$ be a given point of a connected generalized Berwald manifold. If any nonzero element $v\in T_pM$ is a vertical contact point of the Finslerian and the averaged Riemannian metric functions, i.e. $T_pM$ is a vertical contact tangent space then we have a Riemannian manifold.
\end{Cor}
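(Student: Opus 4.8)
The plan is to first exploit the vertical contact condition at the single point $p$ to prove that the Finslerian indicatrix in $T_pM$ is already an ellipsoid, and then to transport this conclusion to every tangent space by means of a compatible linear connection.

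First I would rewrite the vertical contact condition of Definition \ref{contactpoints}. By hypothesis
$$\frac{\partial \log F}{\partial y^i}(v)=\frac{\partial \log F^*}{\partial y^i}(v) \quad (i=1,\ldots,n)$$
holds for \emph{every} nonzero $v\in T_pM$. Equivalently, the Euclidean differential of the smooth function $\log(F/F^*)$ vanishes identically on the punctured tangent space $T_pM\setminus\{\mathbf{0}\}$. Since this set is connected (for $n\geq 2$), the function must be a constant; hence there is a real number $c$ with $F=e^{c}F^*$ on all of $T_pM$. Thus the Finslerian norm on $T_pM$ is a positive constant multiple of the averaged Riemannian norm, i.e. the indicatrix $\partial K_p$ is the $\gamma_p$-sphere of radius $e^{-c}$.

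Next I would propagate the identity $F=e^{c}F^*$ to an arbitrary point $q\in M$. Let $\nabla$ be a compatible linear connection and let $\tau_{qp}\colon T_qM\to T_pM$ be the parallel transport along a curve joining $q$ to $p$; such a curve exists because $M$ is connected. By the compatibility condition the parallel transport preserves the Finslerian length, so $F(w)=F(\tau_{qp}w)$ for every $w\in T_qM$. On the other hand, Theorem \ref{heritage} guarantees that $\nabla$ is metrical with respect to $\gamma$, so $\tau_{qp}$ is a $\gamma$-isometry and therefore $F^*(w)=F^*(\tau_{qp}w)$ as well. Combining these with the relation established at $p$ gives
$$F(w)=F(\tau_{qp}w)=e^{c}F^*(\tau_{qp}w)=e^{c}F^*(w)$$
for every $w\in T_qM$. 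As $q$ was arbitrary, $F=e^{c}F^*$ holds on all of $TM$, and differentiating twice yields $g_{ij}=e^{2c}\gamma_{ij}$, so the Riemann-Finsler metric is independent of the direction and the manifold is Riemannian.

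The step I expect to require the most care is the simultaneous use of the two preservation properties of $\tau_{qp}$: the invariance of $F$ comes directly from the definition of a compatible connection, whereas the invariance of $F^*$ relies essentially on Theorem \ref{heritage}. It is precisely the coincidence of these two invariances under one and the same parallel transport that forces the proportionality constant $c$ to be the same in every tangent space, and hence turns the pointwise conclusion at $p$ into the global statement.
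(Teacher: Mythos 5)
Your proposal is correct and follows essentially the same route as the paper: the vertical contact hypothesis forces $\log(F/F^*)$ to be constant on $T_pM\setminus\{\mathbf{0}\}$, so the indicatrix at $p$ is a quadric, and the (linear, length-preserving) parallel transports of a compatible connection spread this to every tangent space of the connected manifold. Your added observation that $\tau_{qp}$ is also a $\gamma$-isometry by Theorem \ref{heritage} is a nice refinement that identifies the resulting Riemannian metric as $e^{2c}\gamma$ with one global constant, but it is a sharpening of the paper's argument rather than a different one.
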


\begin{Pf}
If any nonzero element in $T_pM$ is a vertical contact point of the Finslerian and the averaged Riemannian metric functions then we have that 
$$\log \frac{F}{F^*} (v)=\textrm{const.}\ \ (v\in T_pM),$$
i.e. $\displaystyle{F(v)=e^{\textrm{const.}} F^*(v)}$ for any  nonzero element $v\in T_pM$. This means that the Finslerian indicatrix is a quadratic hypersurface at a single point $p\in M$. Since we have linear parallel transports between different tangent spaces, the same is true for the Finslerian indicatrix at any point of the (connected) base manifold, i.e. we have a Riemannian manifold.
\end{Pf}

\begin{Cor}
\label{contactpoints03} Let $p\in M$ be a given point of a generalized Berwald manifold. If any nonzero element $v\in T_pM$ is a horizontal contact point of the Finslerian and the averaged Riemannian metric functions, i.e. $T_pM$ is a horizontal contact tangent space then the torsion of the extremal compatible linear connection is identically zero at the point $p\in M$. 
\end{Cor}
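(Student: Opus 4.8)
The plan is to show that under the horizontal contact hypothesis the zero torsion already solves the pointwise extremum problem (\ref{condext}) at $p$, so that the defining minimization returns $T^0_p={\bf 0}$.

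First I would translate the hypothesis via Definition \ref{contactpoints}: saying that $T_pM$ is a horizontal contact tangent space means exactly that $X_i^{h^*}F(v)=0$ for every $i=1,\ldots,n$ and every nonzero $v\in T_pM$. This is precisely the inhomogeneous term in the system (\ref{condextb}) cutting out the affine subspace $A_p$. Substituting $T_p={\bf 0}$ into (\ref{condextb}) annihilates the bracketed torsion combination and leaves only $X_i^{h^*}F(v)=0$, which holds by hypothesis; hence ${\bf 0}\in A_p$. Equivalently, since the inhomogeneous term vanishes identically, the equations (\ref{condextb}) defining $A_p$ reduce to the homogeneous equations (\ref{condextc}) defining $H_p$, so that $A_p=H_p$ is a linear subspace passing through the origin.

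Finally I would invoke the definition of the extremal compatible linear connection, whose torsion at $p$ is the closest point $T^0_p\in A_p$ to the origin. Because ${\bf 0}\in A_p$, the unique minimizer of $\frac{1}{2}\|T_p\|^2$ over $A_p$ is $T_p={\bf 0}$, so $T^0_p={\bf 0}$, as claimed. There is no genuine analytic obstacle here: the entire content is the observation that the horizontal contact condition is exactly the assertion that the zero section satisfies the compatibility equations (\ref{condextb}) at $p$. The only point deserving care is to confirm that the cancellation of the $X_i^{h^*}F$ term turns the inhomogeneous slice $A_p$ into a linear subspace through the origin rather than a displaced affine one, which is precisely the identification of (\ref{condextb}) with (\ref{condextc}) recorded above.
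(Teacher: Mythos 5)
Your proposal is correct and is essentially the paper's own argument: the paper likewise observes that the zero element of $\wedge^2 T_p^*M \otimes T_pM$ satisfies the equations (\ref{condextb}) once $X_i^{h^*}F(v)=0$ for all $i$ and all nonzero $v$, so the norm-minimizing element of $A_p$ is ${\bf 0}$. Your additional remark that $A_p=H_p$ becomes a linear subspace through the origin is a harmless elaboration of the same point.
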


\begin{Pf}
The statement is trivial because the zero element in $\wedge^2 T_p^*M \otimes T_pM$ solves the equations in (\ref{condextb}) under the conditions $X_i^{h^*} F(v)=0$ $(i=1, \ldots, n)$ for any nonzero $v\in T_pM$. 
\end{Pf}

\begin{Rem} 
\label{contactpoints04}{\emph{If we have a classical Berwald manifold then any nonzero element $v\in TM$ is a horizontal contact point of the Finslerian and the averaged Riemannian metric functions and vice versa. The extremal compatible linear connection is $\nabla^*$ (the L\'{e}vi-Civita connection of the averaged Riemannian metric) with vanishing torsion.}}
\end{Rem}

\subsection{The reference element method} In what follows we are going to find the Lagrange multipliers for the torsion tensor of the extremal compatible linear connection. They transform the compatibility condition to a system of linear equations containing at most $n$ unknown parameters instead of $\displaystyle{\binom{n}{2}n}$. Let a point $p\in M$ and the reference element $v\in T_pM\setminus \{{\bf 0}\}$ be given and consider the conditional extremum problem  
\begin{equation}
\label{condextref} \min \frac{1}{2} \|T_p\|^2 \ \ \ \textrm{subject to} \ \ \ T_p\in A_p(v); 
\end{equation}
the affine subspace $A_p(v)\subset \wedge^2 T_p^*M \otimes T_pM$ is defined by
\begin{equation}
\label{condextd} X_i^{h^*}F(v)+\frac{1}{2}y^j (v)\left(T^{l}_{jk}\gamma^{kr}\gamma_{il}+T^{l}_{ik}\gamma^{kr}\gamma_{jl}-T_{ij}^r\right)(p) \frac{\partial F}{\partial y^r}(v)=0
\end{equation}
where $i=1, \ldots, n$. It is clear that $\displaystyle{A_p = \bigcap_{v\in T_pM\setminus \{{\bf 0}\}} A_p(v) \subset A_p(v)}$.

\begin{Lem} Introducing the notations
$$g_i(T_p, v):=X_i^{h^*}F(v)+\frac{1}{2}y^j(v)\left(T^{l}_{jk}\gamma^{kr}\gamma_{il}+T^{l}_{ik}\gamma^{kr}\gamma_{jl}-T_{ij}^r\right)(p) \frac{\partial F}{\partial y^r}(v) \quad (i=1, \ldots, n),$$
$$\textrm{\ rank}\ \frac{\partial g_i}{\partial T_{bc}^a}(T_p, v)=\left\{
\begin{array}{rl}
n&\textrm{if $v$ is not a vertical contact point in $T_pM$}\\
0&\textrm{otherwise}.
\end{array}
\right.$$
\end{Lem}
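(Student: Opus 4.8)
The plan is to read the Jacobian $\partial g_i/\partial T_{bc}^a$ straight off the coefficient formula already prepared in the text. Each $g_i$ is \emph{affine} in the torsion components, so its partial derivative with respect to the independent component $T_{ab}^c$ ($a<b$) is the constant $\sigma_{c;i}^{ab}$ of (\ref{coeff}), independent of the point $T_p$ at which one differentiates; in particular the rank will depend only on $v$, exactly as the statement asserts. Working in a $\gamma$-orthonormal coordinate frame at $p$ (this changes the matrix representation but not the rank of the linear map $T_p\mapsto(g_i)$) I may use the simplified expression (\ref{coeffort}). Since there are exactly $n$ functions $g_1,\dots,g_n$, the rank is at most $n$, so the whole statement reduces to deciding, for each $v$, whether the $n$ gradient rows are linearly independent ($\mathrm{rank}=n$) or all vanish ($\mathrm{rank}=0$).

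The vertical contact case is immediate. If $v$ is a vertical contact point then, by the computation in the proof of Corollary \ref{contactpoints01} (formula (\ref{vertcont3})), each of the skew combinations $y^a\partial F/\partial y^b-y^b\partial F/\partial y^a$ evaluated at $v$ vanishes. Every term of (\ref{coeffort}) carries such a factor, so $\sigma_{c;i}^{ab}(v)=0$ for all indices, the Jacobian is the zero matrix, and the rank is $0$.

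For the generic case I would abbreviate $L_{ab}:=\big(y^a\partial F/\partial y^b-y^b\partial F/\partial y^a\big)(v)$, a skew array, and rewrite (\ref{coeffort}) as
\[
2\sigma_{c;i}^{ab}=\delta_i^c L_{ab}-\delta_i^a L_{bc}+\delta_i^b L_{ac}.
\]
A linear relation $\sum_i\lambda_i\sigma_{c;i}^{ab}=0$ among the rows, stated for $a<b$ and hence by skew-symmetry in $a,b$ for all $a,b,c$, then collapses to the clean identity
\[
\lambda_c L_{ab}-\lambda_a L_{bc}+\lambda_b L_{ac}=0\qquad(a,b,c=1,\dots,n).
\]
The place where I must invoke the hypothesis is that $v$ being \emph{not} a vertical contact point forces $L\neq 0$: if all $L_{ab}$ vanished then $\partial F/\partial y$ would be proportional to $y$, and Euler's relation $y^r\partial F/\partial y^r=F$ together with $F^*=\sqrt{\gamma(v,v)}$ in the orthonormal frame would pin the proportionality factor and return exactly (\ref{vertcont1}), i.e. vertical contact. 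So I may choose $p,q$ with $L_{pq}\neq 0$.

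Finally I would extract $\lambda$ by specialization. Setting $a=p,b=q,c=p$ and using $L_{pp}=0$, $L_{qp}=-L_{pq}$ gives $2\lambda_p L_{pq}=0$, hence $\lambda_p=0$; the symmetric choice $c=q$ gives $\lambda_q=0$; and then $a=p,b=q$ with arbitrary $c$ reduces to $\lambda_c L_{pq}=0$, forcing $\lambda_c=0$ for every $c$. Thus the rows are independent and the rank equals $n$. I expect the only real subtlety to be the bookkeeping in the first step — confirming that the antisymmetry $T^l_{jk}=-T^l_{kj}$ has already been absorbed into (\ref{coeff}) so that $\partial g_i/\partial T_{ab}^c=\sigma_{c;i}^{ab}$ holds with no stray factor of two — together with the one-line converse of Corollary \ref{contactpoints01} that guarantees $L\neq 0$; the remaining algebraic elimination is short.
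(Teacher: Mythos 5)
Your proof is correct, and while the rank-$0$ case and the reduction to the constant coefficient matrix of (\ref{coeffort}) coincide with the paper's treatment, your argument for the rank-$n$ case takes a genuinely different route. The paper argues by exhibiting pivots: for each row $i$ the columns indexed by $(a,b,c)$ with $c=i$ and $a=i$ or $b=i$ vanish in every other row, and the Euler-relation computation in the footnote is invoked to claim that each row has a nonzero entry among these private columns. You instead write $2\sigma_{c;i}^{ab}=\delta_i^c L_{ab}-\delta_i^a L_{bc}+\delta_i^b L_{ac}$ with $L_{ab}=y^a\partial F/\partial y^b-y^b\partial F/\partial y^a$, observe that non-vertical-contact is equivalent to $L\neq 0$ (the same Euler computation, used once and globally rather than row by row), and annihilate an arbitrary dependence vector $\lambda$ by the specializations $(a,b,c)=(p,q,p)$, $(p,q,q)$, $(p,q,c)$ at an index pair with $L_{pq}\neq 0$. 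This buys you something concrete: the paper's per-row pivot claim can fail at a non-vertical-contact $v$ for which $v^i=\partial F/\partial y^i(v)=0$ for some single index $i$ (then $L_{ib}=0$ for all $b$ although $L\neq 0$, so row $i$ has no nonzero private entry and survives only in the column block $c=i$), and the footnote, read per row, only shows that a vanishing row forces the corresponding \emph{single} component of the vertical contact condition — strictly speaking it yields that some row is nonzero rather than that all $n$ rows are independent. Your global elimination still gives $\lambda_i=0$ in that configuration and so closes exactly this gap. The one bookkeeping point you flagged — that (\ref{coeff}) is already normalized to be the coefficient of the independent component $T_{ab}^c$ with $a<b$ — is settled by the paper's own remark that the symmetric differences in (\ref{coeff}) are due to $a<b$.
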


\begin{proof} If $v$ is a vertical contact point of the Finslerian and the averaged Riemannian metric functions then $\sigma_{c; i}^{ab}(v)=0$ $(i=1, \ldots, n)$ because of (\ref{coeffort}) - (\ref{vertcont3}). Otherwise 
$$\begin{array}{|c|c|c|c|c|}
\hline
&&&&\\ 
a=c=i, & b=c=i, & a=c=i,&b=c=i,&a\neq i,\\ 
i< b, i=j&  a< i, i=j&  i< b, i\neq j&a < i, i\neq j&b\neq i, c\neq i\\ 
&&&&\\ \hline
&&&&\\
\sigma_{i;i}^{i b}=y^i \frac{\partial F}{\partial y^b}-y^b\frac{\partial F}{\partial y^i}&\sigma_{i;i}^{a i}=y^a \frac{\partial F}{\partial y^i}-y^i\frac{\partial F}{\partial y^a}&\sigma_{i;j}^{i b}=0&\sigma_{i;j}^{a i}=0&\sigma_{c; i}^{a b}=0\\ 
&&&&\\ \hline
\end{array}$$
and we can find at least one non-zero value among the coefficients $\sigma_{i;i}^{i b}(v)$, where $i< b$ and
$\sigma_{i;i}^{a i}(v)$, where $a<i$ in each row\footnote{Suppose, in contrary, that (for example) $i=1$ and $\sigma_{1;1}^{1 2}=\ldots=\sigma_{1;1}^{1 n}=0,$ i.e. $\displaystyle{v^1\frac{\partial F}{\partial y^b}(v)-v^b \frac{\partial F}{\partial y^1}(v)=0}$, where $b=2, \ldots, n$. Therefore
$$v^1\sum_{b=2}^n v^b \frac{\partial F}{\partial y^b}(v)-\frac{\partial F}{\partial y^1}(v)\sum_{b=2}^n \left(v^b\right)^2=0$$
and the homogeneity property of the metric function implies that 
$$v^1\left(F(v)-v^1\frac{\partial F}{\partial y^1}(v)\right)-\frac{\partial F}{\partial y^1}(v)\left(\left(F^*(v)\right)^2-\left(v^1\right)^2\right)=0 \ \ \Rightarrow\ \ \frac{1}{F(v)}\frac{\partial F}{\partial y^1}(v)=\frac{v^1}{\left(F^*(v)\right)^2}=\frac{1}{F^*(v)}\frac{\partial F^*}{\partial y^1}(v).$$}. The subsequent rows contain zeros in the corresponding positions:
$\displaystyle{\sigma_{i;i+1}^{i b}(v)=\sigma_{i;i+1}^{a i}(v)=\sigma_{i;i+2}^{i b}(v)=\sigma_{i;i+2}^{a i}(v)=\ldots=0}$.
\end{proof}

Let a point $p\in M$ of a connected generalized Berwald manifold be given.

\begin{itemize}
\item [(A)] If any nonzero element $v\in T_pM$ is a vertical contact point of the Finslerian and the averaged Riemannian metric functions, i.e. $T_pM$ is a vertical contact tangent space then we have a Riemannian manifold (see Corollary \ref{contactpoints02}) and the extremal compatible linear connection is $\nabla^*$.
\end{itemize}
Otherwise, let $p\in M$ be given and suppose that $v\in T_pM$ is not a vertical contact reference element. Using the notations in subsection 2.1 the conditional extremum problem can be written into the form
$$\min \frac{1}{2} \|T_p\|^2 \quad \textrm{subject to} \quad X_i^{h^*}F(v)+\langle T_p, \sigma_i(v) \rangle=0 \quad (i=1, \ldots, n),$$
where
$$\sigma_i=\sum_{1\leq a < b \leq n} \sum_{c=1}^n \sigma_{c; i}^{ab}\frac{\partial}{\partial y^a}\wedge \frac{\partial}{\partial y^b} \otimes dy^c$$
and the inner product on the vertical subspace $V_v \wedge^2 T^*_pM \otimes T_pM$ is induced by the Riemannian metric of the torsion tensor bundle:
$$\langle T_p, \sigma_i(v) \rangle =\sum_{1\leq a < b \leq n} \sum_{c=1}^n T_{ab}^c(p) \sigma_{c; i}^{ab}(v);$$
note that the torsion tensor $T$ is identified with its vertically lifted tensor if necessary, i.e. 
$$T=\sum_{1\leq a < b \leq n} \sum_{c=1}^n T_{ab}^c \frac{\partial}{\partial u^a}\wedge \frac{\partial}{\partial u^b} \otimes du^c \rightleftharpoons \sum_{1\leq a < b \leq n} \sum_{c=1}^n T_{ab}^c\circ \pi  \frac{\partial}{\partial y^a}\wedge \frac{\partial}{\partial y^b} \otimes dy^c.$$
The Lagrange method says that 
$$\frac{\mathcal{\partial L}(v)}{\partial T_{ab}^c}(T_p, \lambda_1(v), \ldots, \lambda_n(v))=0,$$
where
$$\mathcal{L}(v)(T_p, \lambda_1(v), \ldots, \lambda_n(v))=\frac{1}{2}\|T_p\|^2-\sum_{j=1}^{n} \lambda_{j}(v)g_{j}(T_p, v)=$$
$$\frac{1}{2}\|T_p\|^2-\sum_{j=1}^{n} \lambda_{j}(v)\left(X_j^{h^*}F(v)+\langle T_p, \sigma_j(v) \rangle \right),$$
i.e.
$$T_{ab}^c(p)-\sum_{j=1}^{n} \lambda_{j}(v)\sigma_{c; j}^{ab}(v)=0 \ \ \Rightarrow \ \ T_{ab}^c(p)=\sum_{j=1}^{n} \lambda_{j}(v)\sigma_{c; j}^{ab}(v).$$
Subtituting into the conditional equations we have that 
\begin{equation}
\label{multiplierseq}
X_{i}^{h^*}F(v)+\sum_{j=1}^n \lambda_{j}(v)\sum_{1\leq a < b \leq n} \sum_{c=1}^n \sigma_{c; i}^{ab}(v)\sigma_{c; j}^{ab}(v)=0 \quad (i=1, \ldots, n)
\end{equation}
and 
\begin{equation}
\label{formula01}
-\mathcal{G}^{-1}\left(\sigma_1(v), \ldots, \sigma_n(v)\right)\left(X_{1}^{h^*}F(v), \ldots, X_n^{h^*}F(v)\right)^T=\left(\lambda_1(v), \ldots, \lambda_n(v)\right)^T,
\end{equation}
where $i=1, \ldots, n$ and $\displaystyle{\mathcal{G}\left(\sigma_1(v), \ldots, \sigma_n(v)\right)}$ is the Gramian with respect to the inner product
$$\langle \sigma_i(v), \sigma_j(v)\rangle=\sum_{1\leq a < b \leq n} \sum_{c=1}^n \sigma_{c; i}^{ab}(v)\sigma_{c; j}^{ab}(v)$$
on the vertical subspace $V_v \wedge^2 T_p^*M \otimes T_pM$. The uniquely determined solutions $\lambda_{1}(v), \ldots, \lambda_n(v)$ give the closest element $T_p^0(v)\in A_p(v)$ to the origin in the linear space $\wedge^2 T_p^*M \otimes T_pM$. Therefore
$$A_p(v)=T_p^0(v)+\mathcal{L}^{\bot}\left(\sigma_1(v), \ldots, \sigma_n(v)\right),$$
where
\begin{equation}
\label{formula02}
T_p^0(v)=\sum_{j=1}^n \lambda_j(v)\sigma_j(v)
\end{equation}
and the coefficients are given by formula (\ref{formula01}). Since  
$$A_p=\bigcap_{v\ \in \ T_pM\setminus \{{\bf 0}\} \ \textrm{is not a vertical contact point}}T_p^0(v)+\mathcal{L}^{\bot}\left(\sigma_1(v), \ldots, \sigma_n(v)\right)$$
it follows that 
$$\dim A_p \leq \dim A_p(v)=\binom{n}{2}n-n,$$
where $v$ is not a vertical contact element. Note that the intersection can be taken with respect to Finslerian (or Riemannian) unit vectors because of the homogeneity properties:
$$A_p=\bigcap_{v\ \in \ \partial K_p \ \textrm{is not a vertical contact point}}T_p^0(v)+\mathcal{L}^{\bot}\left(\sigma_1(v), \ldots, \sigma_n(v)\right)=$$
$$\ \ \ \ \ \ \bigcap_{v\ \in \ \partial K^*_p \ \textrm{is not a vertical contact point}}T_p^0(v)+\mathcal{L}^{\bot}\left(\sigma_1(v), \ldots, \sigma_n(v)\right).$$
\begin{itemize}
\item [(B)] In case of a horizontal but not vertical contact reference element $T_p^0(v)={\bf 0}\in \wedge^2 T_p^*M \otimes T_pM$ because of Definition \ref{contactpoints} and formula (\ref{formula01}). If any nonzero element $v\in T_pM$ is a horizontal contact point of the Finslerian and the averaged Riemannian metric functions, i.e. $T_pM$ is a horizontal contact tangent space then the solution of the extremum problem (\ref{condextref}) is $T_p^0:={\bf 0}$ independently of the reference elements (see Corollary \ref{contactpoints03}). 
\item [(C)] Otherwise, let $p\in M$ be given and suppose that $v\in T_pM$ is not a horizontal contact reference element, i.e. $T_p^0(v)\neq {\bf 0}$. Since $A_p\subset A_p(v)$ it follows that $A_p$ consisting of the solutions of the conditional equation without any reference element must be contained in the hyperplane
\end{itemize}
\begin{equation}
\label{hyperplane}
\langle T_p^0(v), T_p-T_p^0(v)\rangle=0
\end{equation}
of dimension $\displaystyle{\binom{n}{2}n-1}$ in $\wedge^2 T_p^*M \otimes T_pM$. Therefore the process in case (C) can be completed as follows:
\begin{itemize}
\item [\textrm{Step 1}] Let $T^0_p(v_1)$ be the uniquely determined, not identically zero solution of the conditional extremum problem 
\begin{equation}
\label{condext:01} \min \frac{1}{2} \|T_p\|^2 \quad \textrm{subject to} \quad T_p\in A_p(v_1),
\end{equation}
\end{itemize}
where $v_1\in T_pM$ is not a horizontal contact point. If $g_i(T^0_p(v_1), v)=0$ ($i=1, \ldots, n$) for any $v\in T_pM$ then $T^0_p=T_p^0(v_1)$ and we are done.
\begin{itemize}
\item [\textrm{Step 2}] Otherwise, let us choose a nonzero element $v_2$ such that $g_i(T^0_p(v_1), v_2)\neq 0$ for at least one of the indices $i=1, \ldots, n$. Taking $T_p^0(v_1)$ as the origin, let $T_p^0(v_1, v_2)$ be the uniquely determined, not identically zero solution of the conditional extremum problem
\begin{equation}
\label{condext:02} \min \frac{1}{2} \|T_p\|^2 \quad \textrm{subject to} \quad T_p \in A_p(v_2) 
\end{equation}
and
\begin{equation}
\label{extracond:01}
\langle T_p^0(v_1), T_p-T_p^0(v_1) \rangle=0.
\end{equation}
\end{itemize}
Geometrically, $T_p^0(v_1, v_2)$ is the orthogonal projection of $T_p^0(v_1)$ onto $A_p(v_2)$ in the hyperplane (\ref{extracond:01}) of dimension $\displaystyle{\binom{n}{2}n-1}$. If
$$\mathcal{G}\left(\sigma_1(v_2), \ldots, \sigma_n(v_2), T_p^0(v_1)\right)=0$$
then $T_p^0(v_1, v_2)=T_p^0(v_2)$. Otherwise
\begin{equation}
\label{formula03}
-\mathcal{G}^{-1}\left(\sigma_1(v_2), \ldots, \sigma_n(v_2), T_p^0(v_1)\right)\left(X_{1}^{h^*}F(v_2), \ldots, X_n^{h^*}F(v_2), -\|T_p^0(v_1)\|^2\right)^T=
\end{equation}
$$\left(\lambda_1(v_2), \ldots, \lambda_n(v_2), \lambda_{n+1}(v_2)\right)^T.$$
Especially, equation (\ref{extracond:01}) is equivalent to 
$$\lambda_{n+1}(v_2)=1-\sum_{j=1}^n \lambda_j(v)\frac{\langle \sigma_j(v_2), T^0_p(v_1)\rangle}{\|T^0_p(v_1)\|^2}.$$
If $g_i(T_p^0(v_1, v_2), v)=0$ ($i=1, \ldots, n$) for any $v\in T_pM$ then $T^0_p=T_p^0(v_1, v_2)$ and we are done.
\begin{itemize}
\item [\textrm{Step 3}] Otherwise, let us choose a nonzero element $v_3$ such that $g_i(T^0_p(v_1, v_2), v_3)\neq 0$ for at least one of the indices $i=1, \ldots, n$. Taking $T_p^0(v_1, v_2)$ as the origin, let $T_p^0(v_1, v_2, v_3)$ be the uniquely determined, not identically zero solution of the conditional extremum problem 
\begin{equation}
\label{condext:03} \min \frac{1}{2} \|T_p\|^2 \quad \textrm{subject to} \quad T_p\in A_p(v_3)
\end{equation}
and
\begin{equation}
\label{extracond:02}
\langle T_p^0(v_1), T_p-T_p^0(v_1, v_2)\rangle=0, \ \langle T_p^0(v_1, v_2), T_p-T_p^0(v_1, v_2)\rangle=0.
\end{equation}
\end{itemize}
Geometrically, $T_p^0(v_1)$, $T_p^0(v_1, v_2)$ and $T_p^0(v_1, v_2, v_3)$ form an orthogonal chain in the sense that
$$ T_p^0(v_1) \  \bot\ \ T_p^0(v_1, v_2)-T_p^0(v_1) \ \textrm{and} \ \ T_p^0(v_1), \ T_p^0(v_1, v_2)-T_p^0(v_1) \ \bot \ T_p^0(v_1, v_2, v_3)-T_p^0(v_1, v_2).$$
Therefore $T_p^0(v_1, v_2, v_3)$ is the element of a hyperplane of dimension $\displaystyle{\binom{n}{2}n-2}$. In general, if we have $T_p^0(v_1), \ldots, T_p^0(v_1, \ldots, v_{m-1})$ then
\begin{itemize}
\item $T_p^0=T_p^0(v_1, \ldots, v_{m-1})$ in case of $g_i(T_p^0(v_1, \ldots, v_{m-1}),v)=0$ ($i=1, \ldots, n$) for any $v\in T_pM$.
\item Otherwise, let us choose a nonzero element $v_m$ such that $g_i(T_p^0(v_1, \ldots, v_{m-1}), v_m)\neq 0$ for at least one of the indices $i=1, \ldots, n$. Taking $T_p^0(v_1, \ldots, v_{m-1})$ as the origin, let $T_p^0(v_1, \ldots, v_m)$ be the uniquely determined, not identically zero solution of the conditional extremum problem
\begin{equation}
\label{condext:0m} \min \frac{1}{2} \|T_p\|^2 \quad \textrm{subject to} \quad T_p\in A_p(v_m)
\end{equation}
and
\begin{equation}
\label{extracond:0m2}
\langle T_p^0(v_1), T_p-T_p^0(v_1, \ldots, v_{m-1})\rangle=0, \ \langle T_p^0(v_1, v_2), T_p-T_p^0(v_1, \ldots, v_{m-1})\rangle=0, \ \ldots,
\end{equation} 
$$\langle T_p^0(v_1, \ldots, v_{m-1}), T_p-T_p^0(v_1, \ldots, v_{m-1})\rangle=0.$$
\end{itemize}

\begin{Cor}
\label{alg}
For any $m\leq l$
$$T_p^0(v_1), \ T_p^0(v_1, v_2)-T_p^0(v_1), \ \ldots, $$
$$T_p^0(v_1, \ldots, v_{m-1})-T_p^0(v_1, \ldots, v_{m-2}) \ \bot \ T_p^0(v_1, \ldots,v_m)-T_p^0(v_1, \ldots, v_{m-1}),$$
i.e. $T_p^0(v_1), \ldots, T_p^0(v_1, \ldots,v_m)$ is an orthogonal chain, where $\displaystyle{l\leq \binom{n}{2}n}$ is its maximal length and $T_p^0=T_p^0(v_1, \ldots, v_l)$ is the torsion tensor of the extremal compatible linear connection. 
\end{Cor}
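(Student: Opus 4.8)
The plan is to establish the three assertions of the statement separately: the orthogonality of the successive increments, the finite bound $l\leq\binom{n}{2}n$, and the identification of the endpoint with the minimum-norm element of $A_p$. The orthogonality is almost immediate from the construction. At the $m$-th step the minimizer $T_p^0(v_1,\ldots,v_m)$ is required to satisfy the side conditions (\ref{extracond:0m2}), so that upon substituting $T_p=T_p^0(v_1,\ldots,v_m)$ one gets $\langle T_p^0(v_1,\ldots,v_k),\,T_p^0(v_1,\ldots,v_m)-T_p^0(v_1,\ldots,v_{m-1})\rangle=0$ for every $k=1,\ldots,m-1$. Writing each increment $T_p^0(v_1,\ldots,v_k)-T_p^0(v_1,\ldots,v_{k-1})$ as a difference of two such positions, one sees at once that the $m$-th increment is orthogonal to all earlier increments, which is precisely the claimed orthogonal chain property.

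For the finiteness I would first show that every increment $d_m:=T_p^0(v_1,\ldots,v_m)-T_p^0(v_1,\ldots,v_{m-1})$ is nonzero. The $m$-th step is entered only after choosing $v_m$ with $g_i(T_p^0(v_1,\ldots,v_{m-1}),v_m)\neq 0$ for some $i$, that is $T_p^0(v_1,\ldots,v_{m-1})\notin A_p(v_m)$, while the new minimizer lies in $A_p(v_m)$; hence $d_m\neq{\bf 0}$. Being nonzero and pairwise orthogonal, the increments $d_1,\ldots,d_m$ are linearly independent in $\wedge^2 T_p^*M\otimes T_pM$, whose dimension is $\binom{n}{2}n$. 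Consequently the algorithm cannot run for more than $\binom{n}{2}n$ steps, it terminates, and the maximal length satisfies $l\leq\binom{n}{2}n$.

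The decisive point is the last assertion. Termination means $g_i(T_p^0(v_1,\ldots,v_l),v)=0$ for all $i$ and all nonzero $v\in T_pM$, so that $T_p^0(v_1,\ldots,v_l)\in\bigcap_v A_p(v)=A_p$. It remains to prove that this endpoint is the point of $A_p$ closest to the origin, equivalently that it is orthogonal to the direction space $H_p$. For this I would invoke the minimum-norm (Lagrange) characterization: the solution of each conditional extremum problem lies in the span of its constraint gradients, which at the $m$-th step are $\sigma_1(v_m),\ldots,\sigma_n(v_m)$ together with the previously built positions $T_p^0(v_1),\ldots,T_p^0(v_1,\ldots,v_{m-1})$, as exhibited in (\ref{formula02}) and (\ref{formula03}). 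An induction on $m$ then confines every $T_p^0(v_1,\ldots,v_m)$ to the linear span of the vectors $\sigma_i(v_k)$ ($i=1,\ldots,n$, $k=1,\ldots,m$). Since $H_p$ is by its definition (\ref{condextc}) the common orthogonal complement of all the vectors $\sigma_i(v)$, the endpoint $T_p^0(v_1,\ldots,v_l)$ is orthogonal to $H_p$; lying in $A_p=T_p^0(v_1,\ldots,v_l)+H_p$ and orthogonal to $H_p$, it is the foot of the perpendicular from the origin, hence the unique minimizer of $\tfrac12\|\cdot\|^2$ on $A_p$, i.e. the torsion $T_p^0$ of the extremal compatible linear connection. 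I expect the main obstacle to be exactly this step: the orthogonality of the chain is built into the side conditions, but one must verify that the minimum-norm solutions never leave the span of the $\sigma_i(v_k)$, so that reaching $A_p$ forces orthogonality to $H_p$ rather than mere membership in it.
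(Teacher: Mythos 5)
Your proposal is correct, and it is worth noting that the paper states Corollary \ref{alg} without any proof at all, presenting it as an immediate by-product of the construction in Steps 1--3; so your argument supplies reasoning the paper leaves implicit. The first two parts match what the construction is designed to give: substituting the minimizer into the side conditions (\ref{extracond:0m2}) yields $\langle T_p^0(v_1,\ldots,v_k),\,d_m\rangle=0$ for $k\le m-1$, hence orthogonality of the increments, and the nonvanishing of each $d_m$ (forced by $T_p^0(v_1,\ldots,v_{m-1})\notin A_p(v_m)$ while the new iterate lies in $A_p(v_m)$) gives linear independence and the bound $l\le\binom{n}{2}n$. The genuinely valuable contribution is your third part: you correctly observe that termination only places the endpoint in $A_p$, which by itself does not make it the nearest point of $A_p$ to the origin, and you close this gap by an induction showing every iterate stays in $\mathrm{span}\{\sigma_i(v_k)\}\subseteq H_p^{\perp}$ (via the Lagrange representations (\ref{formula02}) and (\ref{formula03})), so that the endpoint lies in $A_p\cap H_p^{\perp}$ and is therefore the unique minimizer. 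This is exactly the step the paper glosses over, and the same span bookkeeping also settles a point you leave tacit: it shows $A_p$ itself satisfies all the accumulated side conditions at every step (the argument for the hyperplane (\ref{hyperplane}) extends inductively), so the feasible sets are nonempty and the constrained minimizers exist. In short, your proof is complete and, if anything, more careful than the source.
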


\begin{Thm} Let $M$ be a connected manifold equipped with a Finslerian metric $F$. It is a generalized Berwald manifold if and only if it is a classical Berwald manifold including the case of the Riemannian manifolds or 
\begin{itemize}
\item the vertical contact elements of the tangent manifold $TM$ are horizontal contact,
\item for any non-horizontal contact tangent space $T_pM$, the form 
$$T^0_p(v_1, \ldots, v_l)\in \wedge^2 T_p^*M \otimes T_pM$$
is independent of the choice of the non-horizontal contact elements, where $\displaystyle{l\leq \binom{n}{2}n}$ is the maximal length of the orthogonal chain $\displaystyle{T_p^0(v_1), T_p^0(v_1, v_2), \ldots, T_p(v_1, v_2, \ldots, v_l)}$, 
\item $\displaystyle{T^0\colon p\in M\to T^0_p:=\left\{\begin{array}{rl}
{\bf 0}&\textrm{in case of a horizontal contact tangent space,}\\
T^0_p(v_1, \ldots, v_l)&\textrm{otherwise}
\end{array}
\right.}$ is a continuous section of the torsion tensor bundle and the corresponding linear connection is compatible to the Finslerian metric.
\end{itemize}
\end{Thm}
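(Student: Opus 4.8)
The plan is to establish the two implications of the equivalence separately, in each case reducing the global statement to the pointwise reference-element construction of the preceding subsection together with Corollaries \ref{smoothness}, \ref{contactpoints01}, \ref{contactpoints03} and \ref{alg} and Remark \ref{contactpoints04}. The organizing principle is that the affine subspace $A_p$ encodes the compatibility condition at the single point $p$: membership $T_p^0\in A_p$ for all $p$ is, by (\ref{condextb}) and its equivalence with (\ref{cond1}), exactly the statement that the associated connection is compatible.

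For the forward implication I would assume that $M$ is a generalized Berwald manifold and fix a compatible linear connection $\nabla$ with torsion $T$. Then the restriction of $T$ to $T_pM\times T_pM$ satisfies (\ref{condextb}), so $A_p$ is nonempty at every point. If $M$ is classical Berwald (including the Riemannian case) the first alternative holds and we are done; otherwise I would verify the three bullet points one by one. The first is precisely Corollary \ref{contactpoints01}. For the second I would appeal to Corollary \ref{alg}: the orthogonal chain terminates at length $l$ with $T_p^0(v_1,\ldots,v_l)$ equal to the point of $A_p=\bigcap_v A_p(v)$ closest to the origin; since $A_p$ is a nonempty affine subspace of the inner-product space $\wedge^2 T_p^*M\otimes T_pM$, this closest point is the unique orthogonal projection of the origin onto $A_p$ and hence does not depend on the chosen non-horizontal contact elements. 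For the third, compatibility is automatic from $T_p^0\in A_p$, while at a horizontal contact tangent space $T_p^0={\bf 0}$ by Corollary \ref{contactpoints03}, consistently with the piecewise definition; continuity of $T^0$ follows from Corollary \ref{smoothness}, since the orthogonal projection of the origin onto a smooth affine distribution of constant rank varies smoothly with the base point.

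For the reverse implication I would first dispose of the classical Berwald (and Riemannian) case: by Remark \ref{contactpoints04} the L\'{e}vi-Civita connection $\nabla^*$ of the averaged Riemannian metric is compatible with vanishing torsion, so $M$ is a generalized Berwald manifold. Assuming instead the three bullet points, the first two guarantee that the pointwise output $T_p^0(v_1,\ldots,v_l)$ is well defined independently of the chosen elements, so that $T^0$ is a genuine section; the third then provides that this section is continuous, that $T_p^0\in A_p$ for every $p$, and hence, substituting its components into (\ref{torsion}), that the continuous connection parameters $\Gamma_{ij}^r$ satisfy (\ref{condextb}), equivalently the compatibility condition (\ref{cond1}), at each point.

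The step I expect to be the main obstacle is closing the regularity gap in this reverse direction: the construction delivers only a continuous connection, whereas a generalized Berwald structure asks for a genuine one. My plan is to run the argument outlined in the Introduction rather than appeal directly to the definition. Continuity of the components $T_{ab}^c$ yields continuous connection parameters, and parallel translations with respect to the resulting connection preserve the Finslerian length of tangent vectors by the computation (\ref{eq:5})--(\ref{cond1}); hence $F$ is monochromatic. The fundamental result \cite{BM} then asserts that monochromaticity is necessary and sufficient for a Finsler metric to be a generalized Berwald metric, which upgrades the continuous compatible connection to a bona fide compatible linear connection and completes the proof. The delicate points to be handled carefully are that parallel transport be well defined and that the derivation (\ref{eq:5})--(\ref{cond1}) of the compatibility condition survive the reduced regularity, which is exactly why the detour through monochromaticity and \cite{BM} is required.
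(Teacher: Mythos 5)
Your proposal is correct and follows essentially the same route as the paper: the forward direction via Corollary \ref{contactpoints01}, the closest-point characterization of $T_p^0$ on the affine subspace $A_p$ (Corollaries \ref{alg} and \ref{contactpoints03}) and continuity from Corollary \ref{smoothness}; the converse by running the algorithm, obtaining continuous connection parameters from (\ref{torsion}), and closing the regularity gap through monochromaticity and the Bartelme{\ss}--Matveev theorem \cite{BM}. Your explicit identification of the continuity-versus-smoothness issue and the role of \cite{BM} in resolving it matches the paper's own argument, only stated in more detail.
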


\begin{proof}
Suppose that we have a connected generalized Berwald manifold. If it is not a classical Berwald manifold including the case of the Riemannian manifolds then we can refer to Corollary \ref{contactpoints01}. On the other hand, 
$$T^0\colon p\in M\to T^0_p:=\left\{\begin{array}{rl}
{\bf 0}&\textrm{in case of a horizontal contact tangent space,}\\
T^0_p(v_1, \ldots, v_l)&\textrm{otherwise}
\end{array}
\right.$$
is the torsion of the extremal compatible linear connection. Using Corollary \ref{smoothness}, it is a continuous section of the torsion tensor bundle. Conversely, suppose that we have a non-Riemannian and non-classical Berwald Finsler manifold such that the vertical contact elements of the tangent manifold $TM$ are horizontal contact. Then the compatibility equations are satisfied at each vertical contact element. Otherwise we can get the algorithm started by formulating the conditional extremum problem (\ref{condextref}) for any non vertical contact $v\in TM\setminus \{{\bf 0}\}$. The algorithm gives a uniquely determined output due to the independency of the form 
$$T^0_p(v_1, \ldots, v_l)\in \wedge^2 T_p^*M \otimes T_pM$$
of the choice of the non horizontal contact elements $v_1, \ldots, v_l\in T_pM$ ($p\in M$). Finally, if the section is continuous then we have a compatible linear connection to the Finslerian metric with continuous connection coefficients because of (\ref{torsion}). Using parallel translations with respect to such a connection we can conclude that the Finsler metric is monochromatic, i.e. there exists a linear mapping preserving the Finslerian norm of tangent vectors between the tangent spaces $T_pM$ and $T_qM$ for any pair of points $p, q\in M$. By the fundamental result of the theory \cite{BM} it is sufficient and necessary for a Finsler metric to be a generalized Berwald metric.
\end{proof}

\section{Summary} The idea of the paper is to find a distinguished compatible linear connection of a (connected) generalized Berwald manifold. Generalized Berwald manifolds are Finsler manifolds admitting linear connections such that the parallel transports preserve the Finslerian length of tangent vectors (compatibi\-li\-ty condition). By the fundamental result of the theory \cite{V5} such a linear connection must be metrical with respect to the averaged Riemannian metric given by integration of the Riemann-Finsler metric on the indicatrix hypersurfaces. The averaged Riemannian metric induces Riemannian metrics on the torsion tensor bundle $\wedge^2 T^*M \otimes TM$ and its vertical bundle in a natural way. Therefore we are looking for the extremal compatible linear connection in the sense that we want to minimize the norm of the torsion point by point.  It is a conditional extremum problem involving functions defined on a local neighbourhood of the tangent manifold. In case of a given point of the manifold, the reference element method helps us to start the application of the Lagrange multiplier rule. The finiteness of the rank of the torsion tensor bundle provides that we can compute the minimizer independently of the reference elements in finitely many steps at each point of the manifold. The pointwise solutions constitute a continuous section of the torsion tensor bundle. The continuity of the components of the torsion tensor implies the continuity of the connection parameters. Using parallel translations with respect to such a connection we can conclude that the Finsler metric is monochromatic. By the fundamental result of the theory \cite{BM} it is sufficient and necessary for a Finsler metric to be a generalized Berwald metric. Therefore we have an intrinsic algorithm to check the existence of compatible linear connections on a Finsler manifold because it is equivalent to the existence of the extremal compatible linear connection. 

\end{document}